\newtheorem{theorem}{Theorem}[section]
\newtheorem{lemma}[theorem]{Lemma}
\newtheorem{corollary}[theorem]{Corollary}
\newtheorem{problem}[theorem]{Problem}
\newtheorem{remark}[theorem]{Remark}
\newtheorem{definition}[theorem]{Definition}
\newtheorem{example}[theorem]{Example}
\numberwithin{equation}{section}
\numberwithin{equation}{section}
\begin{document}
	
		\title{Star versions of Lindel\"{o}f spaces}
	
	\author{ Sumit Singh }

	\address{Department of Mathematics, Dyal Singh College,  University of Delhi, Lodhi Road, New Delhi-110003, India.}
\email{ sumitkumar405@gmail.com}

	\subjclass[2010]{54D20, 54E35}
	
	
	\dedicatory{}
	
	
	\keywords{Menger, star-Lindel\"{o}f, strongly star-Lindel\"{o}f, set star-Lindel\"{o}f, Covering, Star-Covering, topological space.}

	\maketitle
	
	\begin{abstract} A space $ X $ is said to be  set star-Lindel\"{o}f (resp., set strongly star-Lindel\"{o}f) if for each nonempty subset $ A $ of $ X $ and each collection $ \mathcal{U} $ of open sets in $ X $ such that  $ \overline{A} \subseteq \bigcup \mathcal{U} $, there is a countable subset  $ \mathcal{V}$  of $ \mathcal{U} $ (resp., countable subset $ F $ of $ \overline{A} $) such that $ A  \subseteq  {\rm St}( \bigcup \mathcal{V}, \mathcal{U})$ (resp., $ A  \subseteq  {\rm St}( F, \mathcal{U})$). The classes of set star-Lindel\"{o}f spaces and set strongly star-Lindel\"{o}f spaces lie between the class of Lindel\"{o}f spaces and the class of star-Lindel\"{o}f spaces. In this paper, we investigate the relationship among set star-Lindel\"{o}f spaces, set strongly star-Lindel\"{o}f spaces, and other related spaces by providing some suitable examples and study the topological properties of set star-Lindel\"{o}f and set strongly star-Lindel\"{o}f  spaces.
	\end{abstract}

\section{Introduction and Preliminaries}
A cardinal  function $ sL $ defined from a set of topological spaces to a cardinal number $ sL(X) $. Arhangel'skii \cite{AAV} defined a cardinal number $ sL(X) $ of $ X $: the minimal infinite cardinality $ \tau $  such that for every subset $ A\subset X $ and every open cover $ \mathcal{U} $ of $ \overline{A} $, there is a subfamily $ \mathcal{V} \subset \mathcal{U} $ such that $ |\mathcal{V}| \leq \tau $ and $ A \subseteq \overline{\bigcup \mathcal{V}}$.  If $ sL(X)=\omega $, then the space $ X $ is called \textit{sLindel\"{o}f space}. Following this idea, Ko\v{c}inac and Konca  \cite{KK} introduced and studied the new types of selective covering properties called set-covering properties. A space $ X $ is said to have the set-Menger \cite{KK} property if for each nonempty subset $ A $ of $ X $ and each sequence $ (\mathcal{U}_n: n \in \mathbb{N}) $ of collections of  open sets in $ X $ such that for each $ n \in \mathbb{N} $, $ \overline{A} \subseteq \bigcup \mathcal{U}_n $, there is a sequence $ (\mathcal{V}_n: n \in \mathbb{N}) $ such that for each $ n \in \mathbb{N} $, $ \mathcal{V}_n $ is a finite subset of $ \mathcal{U}_n $ and  $ A \subseteq \bigcup_{n \in \mathbb{N}}  \bigcup \mathcal{V}_n $. The author \cite{SS} noticed that the set-Menger property is nothing but another view of Menger covering property.  Recently,  the author \cite{SSCSS}  defined and studied set starcompact and set strongly starcompact spaces (also see \cite{SKK}).

In this paper, we apply the above study of Ko\v{c}inac, Konca, and Singh to defined a new subclass of star-Lindel\"{o}f spaces called set star-Lindel\"{o}f and set strongly star-Lindel\"{o}f spaces. With the help of some suitable example, we investigate the relationship among set star-Lindel\"{o}f, set strongly star-Lindel\"{o}f, and other related spaces. A space  having a dense Lindel\"{o}f subspace is star-Lindel\"{o}f (see \cite{SYN}). We show that this result is not true if we replace star-Lindel\"{o}f space with a set star-Lindel\"{o}f space. 

If $ A  $ is a subset of a space $ X $ and  $\mathcal{U}$ is a collection of subsets of $ X $,   then ${\rm St}(A,\mathcal{U}$)=$ \bigcup \{U \in \mathcal{U} : U \cap A \not= \emptyset \} $. We usually write ${\rm St}(x,\mathcal{U})={\rm St}( \{x\},\mathcal{U}$).

Throughout the paper, by ``a space" we mean ``a topological space", $ \mathbb{N} $, $ \mathbb{R} $ and $ \mathbb{Q} $ denotes the set of natural numbers, set of real numbers, and set of rational numbers, respectively, the cardinality of a set is denoted by $ |A| $. Let $ \omega $ denote the first infinite cardinal, $ \omega_1 $ the first uncountable cardinal, $ \mathfrak{c} $ the cardinality of the set of all real numbers. An open cover $ \mathcal{U} $ of a subset $ A \subset X$ means elements of $ \mathcal{U} $ open in $ X $ such that $A \subseteq \bigcup \mathcal{U}=\bigcup \{U:U \in \mathcal{U} \} $.

We first recall the classical notions of spaces that are used in this paper. 

\begin{definition} \cite{DEK}
	A space X is said to be
	\begin{enumerate}
		\item starcompact if for each open cover $ \mathcal{U} $ of  X, there is a finite subset  $ \mathcal{V}$  of $ \mathcal{U} $ such that $ X  =  {\rm St}( \bigcup \mathcal{V}, \mathcal{U}) $.
		\item strongly starcompact if for each open cover $ \mathcal{U} $ of  X, there is a finite subset  $F$  of $ X $ such that $ X  =  {\rm St}(F, \mathcal{U}) $.
	\end{enumerate}
\end{definition}

\begin{definition} \cite{SSCSS,SKK}
	A space X is said to be
	\begin{enumerate}
		\item set starcompact if for each nonempty subset A of X and each collection $ \mathcal{U} $ of open sets in X such that  $ \overline{A} \subseteq \bigcup \mathcal{U} $, there is a finite subset  $ \mathcal{V}$  of $ \mathcal{U} $ such that $ A  \subseteq  {\rm St}( \bigcup \mathcal{V}, \mathcal{U})$.
		\item set strongly starcompact if for each nonempty subset A of X and each collection $ \mathcal{U} $ of open sets in X such that  $ \overline{A} \subseteq \bigcup \mathcal{U} $, there is a finite subset  $F$  of $ \overline{A} $ such that $ A  \subseteq  {\rm St}(F, \mathcal{U})$.
	\end{enumerate}
\end{definition}

\begin{definition}
	A space X is said to be
	\begin{enumerate}
		\item star-Lindel\"{o}f \cite{DEK} if for each open cover $ \mathcal{U} $ of  $ X $, there is a countable subset  $ \mathcal{V}$  of $ \mathcal{U} $ such that $ X  =  {\rm St}( \bigcup \mathcal{V}, \mathcal{U})$.
		\item strongly star-Lindel\"{o}f \cite{DEK} if for each open cover $ \mathcal{U} $ of  $ X $, there is a countable subset  $ F$  of $ X $ such that $ X  = {\rm St}( F, \mathcal{U})$.
	\end{enumerate}
\end{definition}
Note that the star-Lindel\"{o}f spaces have a different name such as 1-star-Lindel\"{o}f and 1$ \frac{1}{2} $-star-Lindel\"{o}f in different papers (see \cite{DEK,MMV}) and the strongly star-Lindel\"{o}f space is also called star countable in \cite{MMV,XS}. It is clear that, every strongly star-Lindel\"{o}f space is star-Lindel\"{o}f.

Recall that a collection $ \mathcal{A} \subseteq P(\omega) $ is said to be almost disjoint  if each set $  A \in \mathcal{A} $ is infinite and the sets $ A \bigcap B $ are finite for all distinct elements $ A, B \in \mathcal{A} $. For an almost disjoint family $ \mathcal{A} $, put $ \psi(\mathcal{A})= \mathcal{A} \bigcup \omega $ and topologize $ \psi(\mathcal{A}) $  as follows:  for each element $ A \in \mathcal{A} $ and each finite set $ F \subset \omega $, $ \{A\} \bigcup (A \setminus F) $ is a basic open neighborhood of $ A $ and the natural numbers are isolated. The spaces of this type are called Isbell-Mr\'{o}wka $ \psi $-spaces \cite{BM,MS}  or $ \psi (\mathcal{A}) $ space. For other terms and symbols we follow \cite{ER}.

The following result was proved in \cite{SKK}. 

\begin{theorem} \cite{SKK} \label{1.4}	Every countably compact space is set strongly starcompact.
\end{theorem}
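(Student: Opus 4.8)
The plan is to reduce the statement to the closed set $\overline{A}$, on which countable compactness becomes available. First I would recall that a closed subspace of a countably compact space is countably compact, so $\overline{A}$ is countably compact. Since $A \subseteq \overline{A}$, it suffices to produce a finite set $F \subseteq \overline{A}$ with $\overline{A} \subseteq {\rm St}(F, \mathcal{U})$, because then in particular $A \subseteq {\rm St}(F, \mathcal{U})$. Thus the whole difficulty is concentrated in showing that the countably compact space $\overline{A}$, covered by the open collection $\mathcal{U}$ (note $\overline{A} \subseteq \bigcup \mathcal{U}$), admits a finite ``star-center'' $F$ inside it.

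I would argue this by contradiction. Suppose no finite subset works, i.e. for every finite $G \subseteq A$ we have $A \not\subseteq {\rm St}(G, \mathcal{U})$. Using this, I would construct a sequence $(x_n)_{n \in \mathbb{N}}$ in $A$ recursively: choose $x_1 \in A$ arbitrarily, and having chosen $x_1, \dots, x_n$, pick $x_{n+1} \in A \setminus {\rm St}(\{x_1, \dots, x_n\}, \mathcal{U})$, which is possible by the assumption. The key bookkeeping point is that these points are pairwise distinct: each $x_i$ lies in some $U \in \mathcal{U}$ (as $x_i \in \overline{A} \subseteq \bigcup \mathcal{U}$), and for $i \leq n$ that $U$ meets $\{x_1,\dots,x_n\}$ at $x_i$, so $x_i \in {\rm St}(\{x_1,\dots,x_n\},\mathcal{U})$, forcing $x_{n+1} \neq x_i$. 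Hence $S = \{x_n : n \in \mathbb{N}\}$ is an infinite subset of $\overline{A}$.

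To finish, I would invoke countable compactness of $\overline{A}$ in the form: every infinite subset has an $\omega$-accumulation point. Let $x \in \overline{A}$ be such a point for $S$, and fix $U \in \mathcal{U}$ with $x \in U$. Then $U$ contains infinitely many terms of the sequence, so there are indices $m < k$ with $x_m, x_k \in U$; since $U$ meets $\{x_1,\dots,x_{k-1}\}$ at $x_m$, we get $x_k \in {\rm St}(\{x_1,\dots,x_{k-1}\},\mathcal{U})$, contradicting the choice of $x_k$. This contradiction yields a finite $F \subseteq A \subseteq \overline{A}$ with $A \subseteq {\rm St}(F,\mathcal{U})$, as required. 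The step I expect to require the most care is verifying distinctness of the $x_n$ together with the correct use of the $\omega$-accumulation point characterization of countable compactness; everything else is routine.
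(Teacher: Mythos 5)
Your proof is correct. Note that the paper itself contains no proof of this statement --- it is quoted from \cite{SKK} --- but your argument is the standard one for results of this kind: recursively choosing $x_{n+1} \in A \setminus {\rm St}(\{x_1,\dots,x_n\},\mathcal{U})$ and deriving a contradiction from an $\omega$-accumulation point is precisely the classical proof that countably compact spaces are strongly starcompact, adapted to the set version, and you handle the delicate points correctly (pairwise distinctness of the $x_n$, the accumulation point lying in $\overline{A}\subseteq\bigcup\mathcal{U}$ so that some $U\in\mathcal{U}$ contains it, and the fact that finite subsets of $A$ itself suffice); the only cosmetic blemish is that the reduction announced in your first paragraph (covering all of $\overline{A}$ by ${\rm St}(F,\mathcal{U})$) is never actually used, since the contradiction argument directly yields $A\subseteq{\rm St}(F,\mathcal{U})$.
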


\section{set star-Lindel\"{o}f and related spaces}
In this section, we give some examples showing the relationship among set star-Lindel\"{o}f spaces, set strongly star-Lindel\"{o}f spaces, and other related spaces. First we define our main definition.

\begin{definition}
	A space X is said to be
	\begin{enumerate}
		\item set star-Lindel\"{o}f if for each nonempty subset A of X and each collection $ \mathcal{U} $ of open sets in X such that  $ \overline{A} \subseteq \bigcup \mathcal{U} $, there is a countable subset  $ \mathcal{V}$  of $ \mathcal{U} $ such that $ A \subseteq  {\rm St}( \bigcup \mathcal{V}, \mathcal{U}) $.
		\item set strongly star-Lindel\"{o}f if for each nonempty subset A of X and each collection $ \mathcal{U} $ of open sets in X such that  $ \overline{A} \subseteq \bigcup \mathcal{U} $, there is a countable subset  $F$  of $ \overline{A} $ such that $ A \subseteq {\rm St}(F, \mathcal{U})$.
	\end{enumerate}
\end{definition}

The following result shows that the class of set strongly star-Lindel\"{o}f spaces is big enough.

\begin{theorem} \label{2.2}  For a space X, the following statements are hold:
	\begin{enumerate}
		\item If X is a  Lindel\"{o}f space, then X is set strongly star-Lindel\"{o}f.
		
		\item If X is a countably compact space, then X is set strongly star-Lindel\"{o}f. 
	\end{enumerate}
\end{theorem}
\begin{proof} (1). Let $ A \subseteq X $ be a nonempty set and $ \mathcal{U} $ be a collection of open sets such that $ \overline{A} \subseteq \bigcup \mathcal{U} $. Since space $ X $ is Lindel\"{o}f, closed subset $ \overline{A} $ of $ X $ is also Lindel\"{o}f. Thus there exists a countable subset $ \mathcal{V} $ of $ \mathcal{U} $ such that $ A \subseteq \overline{A} \subseteq \bigcup \mathcal{V} $. Choose $ x_V \in \overline{A} \cap V $ for each $ V \in \mathcal{V} $. Let $ F= \{x_V: V \in \mathcal{V} \} $. Then $ F $ is a countable subset of $ \overline{A} $ and $ A \subseteq \bigcup \mathcal{V} \subseteq {\rm St}(F, \mathcal{U}) $. Therefore $ X $ is set strongly star-Lindel\"{o}f.
	
	(2). Since  every set strongly starcompact space is set strongly star-Lindel\"{o}f, by Theorem \ref{1.4}, $ X $ is set strongly star-Lindel\"{o}f.  \end{proof}

We have the following diagram from the definitions and Theorem \ref{2.2}. However, the following examples shows that the converse of these implications are not  true.

\[
\hskip2cm { set \ strongly \ starcompact} \rightarrow { set \ starcompact}
\]
\[
\hskip3.0cm \downarrow  \hskip3.5cm \downarrow
\]
\[ { Lindel{o}f} \rightarrow { set\ strongly \ star-Lindel{o}f}  \rightarrow { set \ star-Lindel{o}f}
\]
\[
\hskip3.0cm \downarrow  \hskip3.5cm \downarrow
\]
\[
\hskip1.9cm { strongly \ star-Lindel{o}f}  \rightarrow  { star-Lindel{o}f}
\]\newline

\begin{example} \label{2.1}
	There exists Tychonoff set strongly star-Lindel\"{o}f (hence, set star-Lindel\"{o}f) space which is not set starcompact (hence, not set strongly starcompact).
\end{example}
\begin{proof}Let $ X=\omega $ be the discrete space. Then X is set strongly star-Lindel\"{o}f  but not set starcompact  space. \end{proof}

\begin{example}
	Let $ X=[0, \omega_1) $. Then $ X $ is countably compact space but not Lindel\"{o}f and not separable. Thus $ X $ is set strongly star-Lindel\"{o}f space which is not Lindel\"{o}f.
\end{example}

The following example shows that the converse of Theorem \ref{2.2}(2)  is not true.

\begin{example}
	Let Y be a discrete space with cardinality $ \mathfrak{c} $. Let $ X= Y\cup \{y^* \}$, where $ y^* \notin  Y $. Define topology on $ X $, each $ y \in Y $ is an isolated point and a set $ U $ containing $ y^* $ is open if and only if $ X \setminus U $ is countable. Then $ X $ is Lindel\"{o}f, hence set strongly star-Lindel\"{o}f. 
	
	Now to show $ X $ is not countably compact. Let $ y^* \in U $ such that $ X \setminus U= \{y_\alpha: \alpha < \omega \} $, where $ y_\alpha \in Y $. Then $ U $ is open in X and $  \mathcal{U}= \{U\} \bigcup \{y_\alpha: \alpha < \omega \} $ is a countable cover of X, which does not have a finite subcover. Thus X is not countably compact.
\end{example}

\begin{example} \label{2.44}
	There exists a Tychonoff strongly star-Lindel\"{o}f space which is not a set strongly star-Lindel\"{o}f.
\end{example}
\begin{proof} Let $ X=\psi(\mathcal{A})= \mathcal{A} \cup \omega $ be the Isbell-Mr\'{o}wka space with $ |A|=\omega_1 $. Then $ X $ is strongly star-Lindel\"{o}f Tychonoff pseudocompact space, since $ X $ is separable. Now we prove that $ X $ is not set strongly star-Lindel\"{o}f. Let $ A= \mathcal{A} = \{a_\alpha: \alpha< \omega_1 \}$. Then $ A $ is closed subset of $ X $. For each $ \alpha< \omega_1 $, let $ U_\alpha= \{a_\alpha \} \cup (a_\alpha) $. Let $ \mathcal{U}= \{U_\alpha: \alpha< \omega_1 \} $. Then $ \mathcal{U} $ is an  open cover of $ \overline{A} $. It is enough to show that there exists a point $ a_\beta  \in A $ such that 
	\begin{center}
		$ a_\beta \notin {\rm St}(F, \mathcal{U}) $,
	\end{center}
	for any countable subset $ F$  of $ \overline{A} $. Let $ F $ be any countable subset of $ \overline{A} $. Then there exists $ \alpha'< \omega_1 $ such that $ U_\alpha \cap  F =\emptyset$, for each $ \alpha > \alpha' $. Pick $ \beta > \alpha' $, then $ U_\beta \cap F =\emptyset$. Since $ U_\beta $ is the only element of $ \mathcal{U} $ containing the point $ a_\beta $. Thus $ a_\beta \notin {\rm St}(F, \mathcal{U}) $, which shows that $ X $ is not set strongly star-Lindel\"{o}f. \end{proof}

The following lemma was proved by Song \cite{SYN}.

\begin{lemma}  $ [ $\cite{SYN}, Lemma 2.2$ ] $ \label{2.8}
	A space $ X $ having a dense Lindel\"{o}f subspace is star-Lindel\"{o}f.
\end{lemma}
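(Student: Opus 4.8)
The plan is to begin with an arbitrary open cover $\mathcal{U}$ of $X$ and to extract a countable subfamily from it by exploiting the dense Lindel\"{o}f subspace. First I would fix a dense Lindel\"{o}f subspace $D$ of $X$. Since $\mathcal{U}$ covers $X$, it covers $D$, so the traces $\{U \cap D : U \in \mathcal{U}\}$ form an open cover of $D$ in the subspace topology. Using the Lindel\"{o}f property of $D$, I would extract a countable subfamily $\mathcal{V} \subseteq \mathcal{U}$ such that $D \subseteq \bigcup \mathcal{V}$.

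The heart of the argument is then to verify that $X = {\rm St}(\bigcup \mathcal{V}, \mathcal{U})$. Given an arbitrary $x \in X$, I would choose $U \in \mathcal{U}$ with $x \in U$. Because $U$ is a nonempty open set and $D$ is dense, we have $U \cap D \neq \emptyset$; and since $D \subseteq \bigcup \mathcal{V}$, this forces $U \cap (\bigcup \mathcal{V}) \neq \emptyset$. By the definition of the star, $U \subseteq {\rm St}(\bigcup \mathcal{V}, \mathcal{U})$, so in particular $x \in {\rm St}(\bigcup \mathcal{V}, \mathcal{U})$. As $x$ was arbitrary, this yields $X = {\rm St}(\bigcup \mathcal{V}, \mathcal{U})$, which is precisely the star-Lindel\"{o}f condition with the countable witness $\mathcal{V}$.

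I do not anticipate any serious obstacle, as the argument is short and uses only elementary facts. The one step that genuinely requires care is the use of density: it is exactly the fact that every nonempty open member of the cover must meet $D$, and hence $\bigcup \mathcal{V}$, that allows the star of $\bigcup \mathcal{V}$ to recapture all of $X$. This is the bridge from ``countable subcover of $D$'' to ``countable star-cover of $X$'', and without density the final step would collapse.
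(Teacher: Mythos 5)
Your proof is correct: extracting a countable subfamily $\mathcal{V}\subseteq\mathcal{U}$ covering the dense Lindel\"{o}f subspace $D$ and then using density to show every $U\in\mathcal{U}$ meets $\bigcup\mathcal{V}$, hence $X={\rm St}(\bigcup\mathcal{V},\mathcal{U})$, is exactly the standard argument for this fact. The paper itself gives no proof, citing the lemma from Song's paper, and your argument is essentially the one given there, so there is nothing to flag.
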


The following example shows that the Lemma \ref{2.8} does not hold if we replace star-Lindel\"{o}f space by a set star-Lindel\"{o}f space.

\begin{example} \label{2.9}
	There exists a Tychonoff space X having a dense Lindel\"{o}f subspace such that X is not set star-Lindel\"{o}f.
\end{example}
\begin{proof}
	Let $ D(\mathfrak{c})=\{d_\alpha: \alpha<\mathfrak{c} \} $ be a discrete space of cardinality $ \mathfrak{c} $ and let $ Y=D(\mathfrak{c})\cup \{d^* \} $ be one-point compactification of $ D(\mathfrak{c}) $. Let 
	
\begin{center}
	
$ X=(Y \times [0, \omega)) \cup (D(\mathfrak{c}) \times \{\omega \}) $ \end{center} be the subspace of the product space $ Y \times [0,\omega] $. Then $ Y \times [0, \omega) $ is a dense Lindel\"{o}f subspace of $ X $ and by Lemma \ref{2.8}, $ X $ is star-Lindel\"{o}f.
	
	Now we show that $ X $ is not set star-Lindel\"{o}f. Let $ A= D(\mathfrak{c}) \times \{\omega \} $. Then $ A $ is the closed subset of $ X $. For each $ \alpha< \mathfrak{c} $, let $ U_\alpha= \{d_\alpha \} \times [0,\omega] $. Then \begin{center}
	
	$ U_\alpha \cap U_{\alpha'}=\emptyset $ for $ \alpha \not= \alpha' $. \end{center}  Let 
	
\begin{center} $ \mathcal{U}= \{U_\alpha:\alpha< \mathfrak{c}  \} $. \end{center}  Then $ \mathcal{U} $ is an open cover of $ \overline{A} $. It is enough to show that there exists a point $ \langle d_\beta, \omega \rangle \in A $ such that $  \langle d_\beta, \omega \rangle \notin {\rm St}(\bigcup \mathcal{V}, \mathcal{U}) $ for any countable subset $ \mathcal{V} $ of $ \mathcal{U} $. Let $ \mathcal{V} $ be any countable subset of $ \mathcal{U} $. Then there exists $ \alpha'< \mathfrak{c} $ such that $ U_\alpha \notin \mathcal{V} $ for each $ \alpha> \alpha' $. Pick $ \beta > \alpha' $. Then $ U_\beta \bigcap (\bigcup \mathcal{V})=\emptyset $, but $ U_\beta $ is the only element of $ \mathcal{U} $ containing $  \langle d_\beta, \omega \rangle $. Thus $  \langle d_\beta, \omega \rangle \notin {\rm St}(\bigcup \mathcal{V}, \mathcal{U}) $. Therefore $ X $ is not set star-Lindel\"{o}f. \end{proof}

\begin{example} 
	There exists a $ T_1 $ set star-Lindel\"{o}f space $ X $ that is not set strongly star-Lindel\"{o}f.
\end{example}
\begin{proof} Let $ X= A \cup B $, where $ A= [0,\mathfrak{c}) $  and $ B =\{b_n:n \in \omega \} $ and for each $ n \in \omega $,  $ b_n \notin A $. Topologize $ X $ as follows: for each $ \alpha \in A $ and each finite subset $ F \subset B $, $ \{\alpha \} \cup (B \setminus F) $ is a basic open neighborhood of $ \alpha $ and for each $ n \in \omega $, $ b_n $ is isolated. Then $ X $ is a $ T_1 $-space. Let $ C $ be any nonempty subset of $ X $ and $ \mathcal{U} $ be an open cover of $ \overline{C} $.
	
	First we show that $ X $ is set star-Lindel\"{o}f space. For this we have three possible cases:
	
	Case (i):  If $ C \subset A $. Then for each $ \alpha \in C $, there exists $ U_{\alpha} \in \mathcal{U} $ such that $ \alpha \in U_{\alpha} $. Then for each $ \alpha \in C $, we can find a finite set $ F_{\alpha} $ such that $ \{\alpha \} \cup (B \setminus F_{\alpha}) \subseteq U_{\alpha} $. It is clear that  for each $ \alpha \not= \alpha' $, $ U_{\alpha} \cap U_{\alpha'} \not= \emptyset $. Let $ \mathcal{V}'= \{U_{\alpha} \} $. Then $ C \subset {\rm St}(\bigcup \mathcal{V}', \mathcal{U}) $. 
	
	Case (ii): If $ C \subset B $. Since $ B $ is countable, thus B is set star-Lindel\"{o}f. Hence we have a countable subset $ \mathcal{V}'' $ of $ \mathcal{U} $ such that $ C \subset {\rm St}(\bigcup \mathcal{V}'', \mathcal{U}) $. 
	
	Case (iii): If $ C=C_1 \cup C_2 $ such that where $ C_1 \subset A $ and $ C_2 \subset B $. Choose $ \mathcal{V}' $ and $ \mathcal{V}'' $ from Case (i) and Case (ii), respectively such that $ C_1 \subset {\rm St}(\bigcup \mathcal{V}', \mathcal{U}) $  and $ C_2 \subset {\rm St}(\bigcup \mathcal{V}'', \mathcal{U}) $. Let $ \mathcal{V}= \mathcal{V} \bigcup \mathcal{V}''$. Then $ \mathcal{V} $ is a countable subset of $ \mathcal{U} $ and $ C \subset {\rm St}(\bigcup \mathcal{V}', \mathcal{U}) $. 
	
	Thus $ X $ is a set star-Lindel\"{o}f space.

	Now we prove that $ X $ is not set strongly star-Lindel\"{o}f space.  Since $ A=[0, \mathfrak{c}) $ is a closed subset of $ X $. For each $ \alpha< \mathfrak{c} $, let $ U_\alpha= \{a_\alpha \} \cup (a_\alpha) $. Then $ \mathcal{U}= \{U_\alpha: \alpha< \mathfrak{c} \} $ is an  open cover of $ A $. It is enough to show that there exists a point $ \beta  \in A $ such that 
	\begin{center}
		$ \beta \notin St(F, \mathcal{U}) $
	\end{center}
	for any countable subset $ F $   of $ A $. Let $ F $ be a countable subset of $ A $. Then there exists $ \alpha'< \mathfrak{c} $ such that $ U_\alpha \cap F =\emptyset$. Pick $ \beta > \alpha' $, then $ U_\beta \cap F =\emptyset$. Since $ U_\beta $ is the only element of $ \mathcal{U} $ containing the point $ \beta $. Thus $ \beta \notin St(F, \mathcal{U}) $, which shows that $ X $ is not set strongly star-Lindel\"{o}f. \end{proof}

\begin{remark}
	(1) In \cite{SSCSS}, Singh gave an example of a Tychonoff set starcompact space $ X $ that is not set strongly starcompact. 
	
	(2) It is known that there are star-Lindel\"{o}f spaces that are not strongly star-Lindel\"{o}f (see [\cite{DEK}, Example 3.2.3.2] and [\cite{DEK}, Example 3.3.1]).
\end{remark}

Now we give some conditions under which star-Lindel\"{o}fness coincide with set star-Lindel\"{o}fness and strongly star-Lindel\"{o}fness coincide with set strongly   star-Lindel\"{o}fness.

Recall that a space $ X $ is paraLindel\"{o}f if every open cover $ \mathcal{U} $ of $ X $ has a locally countable open refinement.

Song and Xuan \cite{SYKX} proved the following result.
\begin{theorem} $ [ $\cite{SYKX}, Theorem 2.24$ ] $ \label{2.10}
	Every regular  paraLindel\"{o}f star-Lindel\"{o}f spaces are Lindel\"{o}f.
\end{theorem}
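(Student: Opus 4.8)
The plan is to fix an arbitrary open cover $\mathcal{U}$ of $X$ and produce a countable subcover, using paraLindel\"{o}fness to replace $\mathcal{U}$ by a locally countable refinement and then using star-Lindel\"{o}fness to select a countable piece of that refinement which, thanks to the local countability, already covers $X$.

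First I would use regularity and paraLindel\"{o}fness to set up the refinement. For each $x\in X$ choose $U_x\in\mathcal{U}$ with $x\in U_x$ and, by regularity, an open set $G_x$ with $x\in G_x\subseteq\overline{G_x}\subseteq U_x$. The family $\{G_x:x\in X\}$ is an open cover, so paraLindel\"{o}fness yields a locally countable open refinement $\mathcal{W}$; since each $W\in\mathcal{W}$ lies in some $G_x$, we have $\overline{W}\subseteq U_x\in\mathcal{U}$. Consequently, if some countable $\mathcal{W}_0\subseteq\mathcal{W}$ covers $X$, then the corresponding countably many members of $\mathcal{U}$ cover $X$; thus it suffices to extract a countable subcover from $\mathcal{W}$.

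Next I would manufacture an auxiliary cover that both refines $\mathcal{W}$ and witnesses local countability. For each $x\in X$ fix $W_x\in\mathcal{W}$ with $x\in W_x$ and, using local countability, an open set $N_x\ni x$ meeting only countably many members of $\mathcal{W}$; set $O_x=N_x\cap W_x$. Then $\mathcal{O}=\{O_x:x\in X\}$ is an open cover with $O_x\subseteq W_x$ and $O_x$ meeting only countably many members of $\mathcal{W}$. Applying star-Lindel\"{o}fness to $\mathcal{O}$ gives a countable $\mathcal{O}'\subseteq\mathcal{O}$ with $X={\rm St}(\bigcup\mathcal{O}',\mathcal{O})$. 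Put $Y=\bigcup\mathcal{O}'$; as $Y$ is a countable union of sets each meeting only countably many members of $\mathcal{W}$, the family $\mathcal{W}_0=\{W\in\mathcal{W}:W\cap Y\neq\emptyset\}$ is countable. Finally, for any $z\in X$ there is, by the star equality, some $O_w\in\mathcal{O}$ with $z\in O_w$ and $O_w\cap Y\neq\emptyset$; since $O_w\subseteq W_w$ this forces $W_w\in\mathcal{W}_0$ and $z\in W_w$, so $\mathcal{W}_0$ covers $X$, completing the extraction.

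The step I expect to be the real obstacle is the one the construction is designed to overcome: for an arbitrary countable $\mathcal{O}'$ the star ${\rm St}(\bigcup\mathcal{O}',\mathcal{O})$ can in principle be assembled from uncountably many members of the cover, so a naive application of star-Lindel\"{o}fness to $\mathcal{W}$ itself gives no control over how many members are touched. The device that breaks this is the double requirement on $\mathcal{O}$ — that each $O_x$ simultaneously sits inside a single member $W_x$ of $\mathcal{W}$ and meets only countably many members of $\mathcal{W}$ — which guarantees both that $Y$ touches only countably many $W$'s and that the refinement relation $O_w\subseteq W_w$ pulls every point of the star back into one of those $W$'s. Verifying that $O_x$ still meets only countably many members after the intersection with $W_x$, and that the countable union $Y$ therefore meets only countably many members of $\mathcal{W}$, are the routine points to check; regularity enters only in the initial shrinking that makes $\overline{W}$ refine $\mathcal{U}$.
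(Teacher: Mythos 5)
Your proof is correct, and since the paper only quotes this result from \cite{SYKX} without reproducing an argument, there is no internal proof to compare it against; judged on its own, your extraction scheme is sound. The key device --- intersecting, for each $x$, a member $W_x$ of the locally countable refinement $\mathcal{W}$ with a witness $N_x$ of local countability, so that every member of the auxiliary cover $\mathcal{O}$ both sits inside a single member of $\mathcal{W}$ and meets only countably many members of $\mathcal{W}$ --- does exactly the work you say it does: the countable core $Y=\bigcup\mathcal{O}'$ touches only countably many members of $\mathcal{W}$, and the inclusion $O_w\subseteq W_w$ pulls every point of ${\rm St}(\bigcup\mathcal{O}',\mathcal{O})$ into one of them, so $\mathcal{W}_0$ is a countable subfamily of $\mathcal{W}$ covering $X$, and refinement transfers this to a countable subcover of $\mathcal{U}$. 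One observation worth making explicit: as written, your argument never actually uses regularity. The initial shrinking $x\in G_x\subseteq\overline{G_x}\subseteq U_x$ is dead weight --- since $\mathcal{W}$ refines $\{G_x:x\in X\}$, each $W$ already lies inside a member of $\mathcal{U}$, and the closures $\overline{W}\subseteq U_x$ are never invoked afterwards; you could apply paraLindel\"{o}fness to $\mathcal{U}$ directly. So your proof establishes the formally stronger statement that every paraLindel\"{o}f star-Lindel\"{o}f space is Lindel\"{o}f (with the definitions of this paper), which parallels the way Theorem \ref{2.12} for metaLindel\"{o}f spaces needs no separation axiom: in both cases the point-/local-countability of the refinement bounds the number of refinement members met by the countable core, and the refinement relation closes the loop. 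You should either delete the regularity steps or remark that the hypothesis is not needed for your argument.
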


We have the following theorem from Theorem \ref{2.10} and the diagram.

\begin{theorem} 
	If X is a regular paraLindel\"{o}f space, then the following statements are equivalent:
	\begin{enumerate}
		\item X is Lindel\"{o}f;
		\item X is set strongly star-Lindel\"{o}f;
		\item X is set star-Lindel\"{o}f;
		\item X is strongly star-Lindel\"{o}f;
		\item X is star-Lindel\"{o}f.
	\end{enumerate}	
\end{theorem}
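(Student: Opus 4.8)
The plan is to close a single cycle of implications so that all five properties coincide, exploiting the fact that every arrow in the displayed diagram preceding the statement is valid for an \emph{arbitrary} space, and that only the passage from star-Lindel\"of back to Lindel\"of requires the regular paraLindel\"of hypothesis.

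First I would record the implications that need no hypothesis. By Theorem~\ref{2.2}(1) we have (1)$\Rightarrow$(2). Straight from the definitions we get (2)$\Rightarrow$(3), (2)$\Rightarrow$(4), (3)$\Rightarrow$(5), and (4)$\Rightarrow$(5): for instance, given a countable $F\subseteq\overline{A}$ witnessing the strong condition, choosing for each $f\in F$ a single member $U_f\in\mathcal{U}$ with $f\in U_f$ yields a countable $\mathcal{V}=\{U_f\}\subseteq\mathcal{U}$ with $F\subseteq\bigcup\mathcal{V}$, whence ${\rm St}(F,\mathcal{U})\subseteq{\rm St}(\bigcup\mathcal{V},\mathcal{U})$ and therefore $A\subseteq{\rm St}(\bigcup\mathcal{V},\mathcal{U})$; the remaining arrows are the horizontal and vertical arrows of the diagram. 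Thus, with no separation or covering assumption at all,
\[
(1)\Rightarrow(2)\Rightarrow(3)\Rightarrow(5), \qquad (2)\Rightarrow(4)\Rightarrow(5).
\]

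Second, I would supply the one nontrivial link, which is where the hypothesis enters. Since $X$ is regular and paraLindel\"of, Theorem~\ref{2.10} gives precisely (5)$\Rightarrow$(1): a regular paraLindel\"of star-Lindel\"of space is Lindel\"of. Finally I would assemble the cycle: the chain $(1)\Rightarrow(2)\Rightarrow(3)\Rightarrow(5)\Rightarrow(1)$ shows that (1), (2), (3), (5) are mutually equivalent, and since $(2)\Rightarrow(4)\Rightarrow(5)$ sandwiches (4) between two members of that class, statement (4) joins the same equivalence class, completing the argument. The only genuine obstacle is the implication (5)$\Rightarrow$(1), but under the standing hypotheses this is exactly the imported Theorem~\ref{2.10}, so the proof reduces to careful bookkeeping of the diagram.
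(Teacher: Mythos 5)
Your proposal is correct and follows essentially the same route as the paper, which likewise obtains the theorem by combining the general diagram of implications (giving $(1)\Rightarrow(2)\Rightarrow(3)\Rightarrow(5)$ and $(2)\Rightarrow(4)\Rightarrow(5)$ with no hypotheses) with the Song--Xuan result, Theorem~\ref{2.10}, for the one nontrivial arrow $(5)\Rightarrow(1)$. Your explicit verification of the elementary arrows simply fills in details the paper delegates to the diagram.
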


A space is said to be metaLindel\"{o}f if every open cover of it has a point-countable open refinement.

Xuan and Shi \cite{XS}  proved the following result.	
	\begin{theorem} $ [ $\cite{XS}, Proposition 3.12$ ] $ \label{2.12}
		Every strongly star-Lindel\"{o}f metaLindel\"{o}f spaces are Lindel\"{o}f.
	\end{theorem}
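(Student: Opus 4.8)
The plan is to take an arbitrary open cover of $X$ and manufacture a countable subcover, using metaLindel\"{o}fness to replace the cover by a point-countable refinement and then strong star-Lindel\"{o}fness to pin down a countable set whose star already covers $X$.

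First I would fix an open cover $\mathcal{U}$ of $X$. Since $X$ is metaLindel\"{o}f, choose a point-countable open refinement $\mathcal{W}$ of $\mathcal{U}$. Note that $\mathcal{W}$ is itself an open cover of $X$, so applying the strongly star-Lindel\"{o}f hypothesis to $\mathcal{W}$ produces a countable set $F=\{x_n:n\in\omega\}\subseteq X$ with $X={\rm St}(F,\mathcal{W})$.

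The crux is to show that $\mathcal{W}'=\{W\in\mathcal{W}:W\cap F\neq\emptyset\}$ is countable. A member $W$ lies in $\mathcal{W}'$ exactly when $x_n\in W$ for some $n$, so $\mathcal{W}'=\bigcup_{n\in\omega}\{W\in\mathcal{W}:x_n\in W\}$. Point-countability of $\mathcal{W}$ makes each set in this union countable, and a countable union of countable sets is countable; hence $\mathcal{W}'$ is countable. Since $X={\rm St}(F,\mathcal{W})=\bigcup\mathcal{W}'$, the countable family $\mathcal{W}'$ covers $X$. Finally, because $\mathcal{W}$ refines $\mathcal{U}$, for each $W\in\mathcal{W}'$ pick $U_W\in\mathcal{U}$ with $W\subseteq U_W$; then $\{U_W:W\in\mathcal{W}'\}$ is a countable subfamily of $\mathcal{U}$ covering $X$, so $X$ is Lindel\"{o}f.

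I expect the only real content to be the counting in the crux step: the whole argument rests on the interaction between the countability of $F$, coming from strong star-Lindel\"{o}fness, and the point-countability of $\mathcal{W}$, coming from metaLindel\"{o}fness, which together force ${\rm St}(F,\mathcal{W})$ to be a union of only countably many open sets. The passage from the refinement $\mathcal{W}$ back to $\mathcal{U}$ is routine and uses no separation axioms, so no regularity or $T_2$ assumption is needed.
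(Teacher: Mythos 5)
Your proof is correct: the star $\mathrm{St}(F,\mathcal{W})$ is exactly $\bigcup\{W\in\mathcal{W}: W\cap F\neq\emptyset\}$, and point-countability of the refinement $\mathcal{W}$ together with countability of $F$ makes this a countable union, after which passing back to $\mathcal{U}$ via the refinement is routine. The paper does not reprove this statement but cites it from Xuan and Shi (Proposition 3.12), and your argument is precisely the standard one for that result, so there is nothing to compare beyond noting the agreement.
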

We have the following theorem from Theorem \ref{2.12} and the diagram.

\begin{theorem} \label{2.6}
	If X is a metaLindel\"{o}f space, then the following statements are equivalent:
\begin{enumerate}
		\item X is Lindel\"{o}f;
	\item X is set strongly star-Lindel\"{o}f;
	\item X is strongly star-Lindel\"{o}f.
\end{enumerate}	
\end{theorem}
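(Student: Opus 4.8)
The plan is to establish the equivalence by proving the cycle of implications $(1)\Rightarrow(2)\Rightarrow(3)\Rightarrow(1)$, drawing on the facts already available in the paper: Theorem \ref{2.2}, the generic implications recorded in the diagram, and the external Theorem \ref{2.12} of Xuan and Shi. The metaLindel\"{o}f hypothesis will enter only at the final step.

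The implication $(1)\Rightarrow(2)$ is immediate from Theorem \ref{2.2}(1), which asserts that every Lindel\"{o}f space is set strongly star-Lindel\"{o}f; no separate argument is required.

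For $(2)\Rightarrow(3)$ I would specialize the defining condition of set strong star-Lindel\"{o}fness to the subset $A=X$. Given an open cover $\mathcal{U}$ of $X$, we have $\overline{A}=\overline{X}=X=\bigcup\mathcal{U}$, so the hypothesis furnishes a countable $F\subseteq\overline{A}=X$ with $X=A\subseteq{\rm St}(F,\mathcal{U})$, that is, $X={\rm St}(F,\mathcal{U})$. This is precisely strong star-Lindel\"{o}fness, and it is exactly the vertical arrow appearing in the diagram; so this step is essentially definitional.

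The remaining implication $(3)\Rightarrow(1)$ is the only place where the metaLindel\"{o}f assumption is used, and it is exactly the content of Theorem \ref{2.12}: a strongly star-Lindel\"{o}f metaLindel\"{o}f space is Lindel\"{o}f. Closing the cycle, all three conditions coincide. I anticipate no genuine obstacle in this argument, since two of the three steps follow directly from the definitions or from Theorem \ref{2.2}, and the substantive mathematical content is carried entirely by the quoted Theorem \ref{2.12}, whose proof lies outside the present paper.
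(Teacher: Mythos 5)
Your proposal is correct and is essentially the paper's own argument: the paper derives the theorem in one line from the implication diagram (Lindel\"{o}f $\Rightarrow$ set strongly star-Lindel\"{o}f by Theorem \ref{2.2}(1), and set strongly star-Lindel\"{o}f $\Rightarrow$ strongly star-Lindel\"{o}f by specializing $A=X$) together with Theorem \ref{2.12}. You have merely written out explicitly the arrows the paper leaves implicit, and all three steps check out.
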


	\section{Properties of set star-Lindel\"{o}f spaces and set strongly star-Lindel\"{o}f spaces}
	In this section, we study the topological properties of set star-Lindel\"{o}f and set strongly star-Lindel\"{o}f spaces.

\begin{theorem} \label{3.3}
	If X is a  set star-Lindel\"{o}f space, then every open and closed subset of X is set star-Lindel\"{o}f.
\end{theorem}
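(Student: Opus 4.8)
The plan is to reduce the set star-Lindel\"{o}f property of a clopen subspace directly to that of the ambient space. Let $Y$ be a subset of $X$ that is both open and closed, and suppose $A$ is a nonempty subset of $Y$ together with a collection $\mathcal{U}$ of open sets in $Y$ satisfying $\overline{A}^{Y} \subseteq \bigcup \mathcal{U}$, where $\overline{A}^{Y}$ denotes the closure of $A$ computed in the subspace $Y$. The goal is to produce a countable $\mathcal{V} \subseteq \mathcal{U}$ with $A \subseteq {\rm St}(\bigcup\mathcal{V}, \mathcal{U})$.

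First I would record the two consequences of $Y$ being clopen that drive the argument. Since $Y$ is open in $X$, each member of $\mathcal{U}$ is open in $Y$ and hence open in $X$, so $\mathcal{U}$ may be regarded as a collection of open sets of the whole space $X$. Since $Y$ is closed in $X$ and $A \subseteq Y$, the closure of $A$ taken in $X$ already lies inside $Y$, whence $\overline{A} = \overline{A}^{Y}$; in particular $\overline{A} \subseteq \bigcup\mathcal{U}$. Thus the pair $(A, \mathcal{U})$ satisfies the hypotheses of the set star-Lindel\"{o}f property for the space $X$.

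Applying that property to $A$ and $\mathcal{U}$ in $X$ yields a countable subfamily $\mathcal{V} \subseteq \mathcal{U}$ with $A \subseteq {\rm St}(\bigcup\mathcal{V}, \mathcal{U})$, the star being taken in $X$. The remaining point, which I regard as the only thing genuinely needing verification, is that this same $\mathcal{V}$ works inside $Y$. Because every element of $\mathcal{U}$ is contained in $Y$, the condition $U \cap \bigcup\mathcal{V} \neq \emptyset$ that selects which members of $\mathcal{U}$ contribute to the star is insensitive to whether everything is viewed in $X$ or in $Y$; consequently ${\rm St}(\bigcup\mathcal{V}, \mathcal{U})$ is literally the same set in both spaces, and it is itself a subset of $Y$. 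Hence $A \subseteq {\rm St}(\bigcup\mathcal{V}, \mathcal{U})$ holds as a statement about $Y$, which is exactly what the definition of set star-Lindel\"{o}f demands.

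I expect no serious obstacle: the argument is the transfer of a single instance of the property, and the only care needed is the bookkeeping that both the closure operator and the star operator are unchanged in passing between $X$ and the clopen set $Y$, each of these facts being immediate from clopenness. It is worth emphasizing where the two hypotheses enter, since this clarifies why ``open and closed'' is the natural assumption: openness of $Y$ guarantees that $\mathcal{U}$ consists of open sets of $X$, while closedness of $Y$ is what identifies $\overline{A}^{Y}$ with $\overline{A}$, and dropping either one would break the reduction.
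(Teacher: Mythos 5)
Your proof is correct and follows essentially the same route as the paper's: regard $\mathcal{U}$ as open in $X$ (openness of $Y$), identify $\overline{A}^{Y}$ with $\overline{A}$ (closedness of $Y$), and apply the set star-Lindel\"{o}f property of $X$ directly. Your extra remark that the star ${\rm St}(\bigcup\mathcal{V},\mathcal{U})$ is literally the same set computed in $Y$ or in $X$ is a detail the paper leaves implicit, and it tidies up the only point the paper glosses over.
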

\begin{proof} Let $ X $ be a set star-Lindel\"{o}f space and $ A \subseteq X $ be an open and closed set. Let $ B $ be any subset of $ A $ and $ \mathcal{U} $ be a collection of open sets in $ (A, \tau_A) $ such that $ Cl_A (B) \subseteq \bigcup \mathcal{U} $. Since $ A $ is open, then $ \mathcal{U} $ is a collection of open sets in $ X $. Since $ A $ is closed, $ Cl_A(B)=Cl_X (B) $. Applying the set star-Lindel\"{o}fness property of $ X $, there exists a countable subset $ \mathcal{V} $  of $ \mathcal{U} $ such that $ B \subseteq St (\bigcup \mathcal{V},\mathcal{U})  $. Hence $ A $ is a set star-Lindel\"{o}f.\end{proof}

Similarly, we can prove the following.
\begin{theorem}
	If X is a set strongly star-Lindel\"{o}f space, then every open and closed subset of X is set strongly star-Lindel\"{o}f.
\end{theorem}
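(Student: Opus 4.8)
The plan is to mirror exactly the proof of Theorem~\ref{3.3}, since the two statements differ only in the replacement of ``countable subset $\mathcal{V}$ of $\mathcal{U}$ witnessing $B \subseteq \mathrm{St}(\bigcup\mathcal{V},\mathcal{U})$'' by ``countable subset $F$ of $Cl_A(B)$ witnessing $B \subseteq \mathrm{St}(F,\mathcal{U})$''. So let $X$ be set strongly star-Lindel\"of and let $A \subseteq X$ be open and closed, with subspace topology $\tau_A$. Take any nonempty $B \subseteq A$ and any collection $\mathcal{U}$ of $\tau_A$-open sets with $Cl_A(B) \subseteq \bigcup\mathcal{U}$; the goal is to produce a countable $F \subseteq Cl_A(B)$ with $B \subseteq \mathrm{St}(F,\mathcal{U})$.

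The two structural facts that make the argument go through are precisely the two observations used in Theorem~\ref{3.3}. First, because $A$ is open in $X$, every $\tau_A$-open set is open in $X$, so $\mathcal{U}$ is a legitimate collection of open sets in $X$. Second, because $A$ is closed in $X$, the subspace closure and the ambient closure agree: $Cl_A(B) = Cl_X(B) = \overline{B}$. Hence $\overline{B} \subseteq \bigcup\mathcal{U}$ holds in $X$, and $B$ together with $\mathcal{U}$ forms an admissible instance for the set strong star-Lindel\"ofness of $X$.

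Applying set strong star-Lindel\"ofness of $X$ to this instance yields a countable set $F \subseteq \overline{B}$ with $B \subseteq \mathrm{St}(F,\mathcal{U})$, where the star is computed using $\mathcal{U}$ as a family of subsets of $X$. Since $\overline{B} = Cl_A(B)$, the set $F$ is a countable subset of $Cl_A(B)$, as required. The only point needing a word of care is that $\mathrm{St}(F,\mathcal{U})$ be interpreted consistently: as $\mathcal{U}$ consists of sets contained in $A$ and $F \subseteq A$, the union $\bigcup\{U \in \mathcal{U} : U \cap F \neq \emptyset\}$ is the same whether the elements of $\mathcal{U}$ are viewed as subsets of $A$ or of $X$, so the star relative to the subspace $A$ coincides with the star relative to $X$. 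This gives $B \subseteq \mathrm{St}(F,\mathcal{U})$ inside $(A,\tau_A)$ and shows $A$ is set strongly star-Lindel\"of.

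I do not expect any genuine obstacle here: the proof is a routine transcription of the open-and-closed subspace argument, and the ``open $\Rightarrow$ $\tau_A$-open sets are $X$-open'' and ``closed $\Rightarrow$ $Cl_A = Cl_X$'' steps are exactly the pieces already isolated in Theorem~\ref{3.3}. The only thing to be pedantic about is the harmless but necessary remark that the star operation is unaffected by whether we regard $\mathcal{U}$ as living in $A$ or in $X$, which is why selecting $F$ from $\overline{B} = Cl_A(B)$ rather than merely from $X$ is automatic.
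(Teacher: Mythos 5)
Your proof is correct and is exactly the argument the paper intends: it states the result with ``Similarly, we can prove the following'' after Theorem~3.3, and your transcription---$A$ open gives $\tau_A$-open sets open in $X$, $A$ closed gives $Cl_A(B)=Cl_X(B)$ so the countable set $F$ lands in $Cl_A(B)$---is that same routine adaptation, with the added (correct) remark that the star is unaffected by viewing $\mathcal{U}$ in $A$ or in $X$.
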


Consider the Alexandorff duplicate $ A(X)=X \times \{0,1\} $ of a space $ X $. The basic neighborhood of a point $ \langle x,0 \rangle \in X \times \{0\} $ is of the form $ (U \times \{0\}) \bigcup (U \times \{1\} \setminus \{\langle x,1 \rangle\}) $, where $ U $ is a neighborhood of $ x $ in $ X $ and each point  $ \langle x,1 \rangle \in X \times \{1\} $ is a isolated point.

\begin{theorem} \label{3.7}
	If X is a $ T_1 $-space and A(X) is a set star-Lindel\"{o}f space. Then $ e(X)< \omega_1 $.
\end{theorem}
\begin{proof}Suppose that $ e(X) \geq \omega_1 $. Then there exists a discrete closed subset $ B $ of $ X $ such that $ |B| \geq \omega_1 $. Hence $ B \times \{1\} $ is an open and closed subset of $ A(X) $ and every point of $ B \times \{1\} $ is an isolated point. Thus $ A(X) $ is not set star-Lindel\"{o}f, by Theorem \ref{3.3}, every open and closed subset of a set star-Lindel\"{o}f space is set star-Lindel\"{o}f and $ B \times \{1\} $ is not set star-Lindel\"{o}f. \end{proof}

\begin{corollary}
	If X is a $ T_1 $-space and A(X) is a set strongly star-Lindel\"{o}f space. Then $ e(X)< \omega_1 $.
\end{corollary}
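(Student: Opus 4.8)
The plan is to mirror the proof of Theorem \ref{3.7}, replacing ``set star-Lindel\"{o}f'' by ``set strongly star-Lindel\"{o}f'' throughout and invoking the corresponding hereditary result for open-and-closed subspaces. I would argue by contradiction: suppose $ e(X) \geq \omega_1 $. Since $ X $ is $ T_1 $, there is a closed discrete subset $ B \subseteq X $ with $ |B| \geq \omega_1 $.

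As in the proof of Theorem \ref{3.7}, the set $ B \times \{1\} $ is an open and closed subspace of the Alexandroff duplicate $ A(X) $, and from the description of the basic neighborhoods every point of $ B \times \{1\} $ is isolated; hence, as a subspace, $ B \times \{1\} $ is a discrete space of cardinality at least $ \omega_1 $. The key point is that such a space can never be set strongly star-Lindel\"{o}f. Indeed, let $ D $ be an uncountable discrete space, take $ A = D $ (so that $ \overline{A} = D $, as $ D $ is closed in itself), and let $ \mathcal{U} = \{\{d\} : d \in D\} $ be the cover of $ \overline{A} $ by singletons. For any countable $ F \subseteq \overline{A} $ we have $ {\rm St}(F, \mathcal{U}) = F $, because a singleton $ \{d\} $ meets $ F $ only when $ d \in F $; thus $ {\rm St}(F, \mathcal{U}) $ is countable and cannot contain the uncountable set $ A $. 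So no countable $ F \subseteq \overline{A} $ witnesses the property, and $ D $ fails to be set strongly star-Lindel\"{o}f.

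Finally, by the theorem immediately following Theorem \ref{3.3} (every open and closed subspace of a set strongly star-Lindel\"{o}f space is set strongly star-Lindel\"{o}f), the assumed set strongly star-Lindel\"{o}fness of $ A(X) $ would force the open-and-closed subspace $ B \times \{1\} $ to be set strongly star-Lindel\"{o}f, contradicting the previous paragraph. Hence $ e(X) < \omega_1 $. I expect no real difficulty here; the only step deserving care is confirming that $ B \times \{1\} $ is genuinely open-and-closed and carries the discrete topology, which is immediate from the definition of the neighborhood base of $ A(X) $.
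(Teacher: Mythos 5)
Your proposal is correct and follows essentially the same route the paper intends for this corollary: it is the proof of Theorem \ref{3.7} with ``set star-Lindel\"{o}f'' replaced by ``set strongly star-Lindel\"{o}f'' and the corresponding open-and-closed hereditary theorem invoked. Your only addition is to spell out, via the singleton cover and the observation that $ {\rm St}(F,\mathcal{U})=F $, why an uncountable discrete space fails to be set strongly star-Lindel\"{o}f --- a step the paper leaves implicit, and which you verify correctly.
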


\begin{theorem} 
	Let X be a space such that the Alexandorff duplicate $ A(X) $ of X is set star-Lindel\"{o}f (resp., set strongly star-Lindel\"{o}f). Then $ X $ is a set star-Lindel\"{o}f (resp., set strongly star-Lindel\"{o}f) space.
\end{theorem}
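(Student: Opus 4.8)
The plan is to transfer the covering data for $X$ up to $A(X)$, apply the hypothesis there, and then push the resulting countable selection back down to $X$. I would identify $X$ with the closed copy $X \times \{0\}$ of $A(X)$. Given a nonempty $A \subseteq X$ and a collection $\mathcal{U}$ of open sets of $X$ with $\overline{A} \subseteq \bigcup \mathcal{U}$, I would first record two structural facts about the duplicate. First, since every point of $X \times \{1\}$ is isolated and a basic neighborhood of $\langle x, 0\rangle$ meets $A \times \{0\}$ exactly when the corresponding neighborhood of $x$ meets $A$, the closure of $A \times \{0\}$ in $A(X)$ is precisely $\overline{A} \times \{0\}$. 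Second, for each $U \in \mathcal{U}$ the set $U^{*} := U \times \{0,1\}$ is open in $A(X)$: the part $U \times \{1\}$ is open because its points are isolated, and each $\langle x,0\rangle$ with $x \in U$ has the basic neighborhood determined by $U$ sitting inside $U^{*}$.

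With these in hand, set $\mathcal{W} = \{U^{*} : U \in \mathcal{U}\}$, a collection of open sets of $A(X)$. From $\overline{A} \subseteq \bigcup\mathcal{U}$ we get $\overline{A \times \{0\}} = \overline{A}\times\{0\} \subseteq \bigcup\mathcal{W}$, so the hypothesis applies to the subset $A \times \{0\}$ with the cover $\mathcal{W}$. In the set star-Lindel\"of case this yields a countable $\mathcal{W}' \subseteq \mathcal{W}$ with $A \times \{0\} \subseteq {\rm St}(\bigcup\mathcal{W}', \mathcal{W})$; let $\mathcal{V} = \{U \in \mathcal{U} : U^{*} \in \mathcal{W}'\}$, a countable subfamily of $\mathcal{U}$. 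The crucial observation is that stars are carried faithfully between the two covers because $U^{*} \cap (U')^{*} = (U \cap U') \times \{0,1\}$ is nonempty iff $U \cap U' \neq \emptyset$. Hence for $x \in A$, picking $U^{*} \ni \langle x,0\rangle$ meeting $\bigcup\mathcal{W}'$ produces $U \ni x$ in $\mathcal{U}$ meeting $\bigcup\mathcal{V}$, so $x \in {\rm St}(\bigcup\mathcal{V}, \mathcal{U})$; thus $A \subseteq {\rm St}(\bigcup\mathcal{V},\mathcal{U})$ and $X$ is set star-Lindel\"of.

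For the set strongly star-Lindel\"of case the argument is parallel: the hypothesis produces a countable $F' \subseteq \overline{A\times\{0\}} = \overline{A}\times\{0\}$ with $A \times \{0\} \subseteq {\rm St}(F', \mathcal{W})$. Since $F'$ lies in $X \times \{0\}$, its projection $F = \{x \in \overline{A} : \langle x,0\rangle \in F'\}$ is a countable subset of $\overline{A}$, and the same faithful-star computation gives $A \subseteq {\rm St}(F, \mathcal{U})$. I expect no serious obstacle here; the only points needing care are the two structural facts above --- the identification $\overline{A\times\{0\}} = \overline{A}\times\{0\}$ and the openness of the sets $U^{*}$ --- which ensure both that the covering problem on $X$ lifts to a genuine covering problem on $A(X)$ and that the countable selection returned downstairs stays inside $\mathcal{U}$ (resp.\ inside $\overline{A}$).
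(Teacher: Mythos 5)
Your proof is correct and follows essentially the same route as the paper: lift the cover to $\{U \times \{0,1\} : U \in \mathcal{U}\}$, apply the hypothesis to $A \times \{0\}$ in $A(X)$, and pull the countable selection back down via the observation that $U \times \{0,1\}$ meets the selected union (resp.\ the countable set $F'$) exactly when $U$ does. Your explicit verification that $\overline{A \times \{0\}} = \overline{A} \times \{0\}$ and your written-out strongly star-Lindel\"of case are details the paper leaves implicit, but the argument is the same.
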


\begin{proof} Because the proof of the two cases are quite similar, we only prove the case of set star-Lindel\"{o}f.
	
	 Let $ B $ be any nonempty subset of $ X $ and $ \mathcal{U} $ be an open cover of  $ \overline{B}$. Let $ C =B \times \{ 0\} $ and
	
\begin{center}$ A(\mathcal{U})= \{U\times \{0,1 \} : U \in \mathcal{U} \}$. \end{center} Then $ A(\mathcal{U}) $ is an open cover of $ \overline{C} $. Since $ A(X) $ is set star-Lindel\"{o}f, there is a countable subset  $ A(\mathcal{V}) $  of $ A(\mathcal{U}) $ such that $ C \subseteq  {\rm St}(\bigcup A(\mathcal{V}), A(\mathcal{U})) $. Let 
	
\begin{center} $ \mathcal{V}= \{U \in \mathcal{U}: U \times \{0,1 \} \in A(\mathcal{V}) \} $. \end{center} Then  $ \mathcal{V} $ is a countable subset of $ \mathcal{U} $. Now we have to show that
	
	\begin{center}$ B \subseteq  {\rm St}(\bigcup \mathcal{V}, \mathcal{U}) $. \end{center}  Let $ x \in B$. Then $ \langle x,0 \rangle \in {\rm St}(\bigcup A(\mathcal{V}), A(\mathcal{U})) $. Choose $ U \times \{0,1 \} \in A(\mathcal{U})$ such that $ \langle x,0 \rangle \in U \times \{0,1 \} $ and $ U \times \{0,1 \} \cap (\bigcup A(\mathcal{V})) \not= \emptyset $, which implies $ U \cap (\bigcup \mathcal{V})\not= \emptyset $ and $ x \in U $. Therefore $ x \in {\rm St}(\bigcup \mathcal{V}, \mathcal{U}) $, which shows that $ X $ is set star-Lindel\"{o}f space. \end{proof}

On the images of set star-Lindel\"{o}f spaces, we have the following result.

\begin{theorem} \label{3.10}
	A continuous image of set star-Lindel\"{o}f space is set star-Lindel\"{o}f.
\end{theorem}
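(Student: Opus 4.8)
The plan is to pull the problem back along the map, apply set star-Lindel\"{o}fness in the domain, and then push the resulting countable subfamily forward. Let $ f:X\to Y $ be a continuous surjection with $ X $ set star-Lindel\"{o}f, and suppose we are given a nonempty subset $ B $ of $ Y $ together with a collection $ \mathcal{U} $ of open sets in $ Y $ satisfying $ \overline{B}\subseteq\bigcup\mathcal{U} $. First I would set $ A=f^{-1}(B) $ and $ \mathcal{W}=\{f^{-1}(U):U\in\mathcal{U}\} $. Since $ f $ is onto and $ B\neq\emptyset $, the set $ A $ is a nonempty subset of $ X $, and by continuity every member of $ \mathcal{W} $ is open in $ X $.

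The next step is to verify that $ (A,\mathcal{W}) $ is an admissible pair for the set star-Lindel\"{o}f property of $ X $, that is, that $ \overline{A}\subseteq\bigcup\mathcal{W} $. This follows from continuity together with surjectivity: $ f(\overline{A})\subseteq\overline{f(A)}=\overline{B}\subseteq\bigcup\mathcal{U} $, where $ f(A)=f(f^{-1}(B))=B $ uses that $ f $ is onto; hence $ \overline{A}\subseteq f^{-1}(\bigcup\mathcal{U})=\bigcup\mathcal{W} $. Applying the hypothesis on $ X $ to $ A $ and $ \mathcal{W} $ then yields a countable subfamily of $ \mathcal{W} $, which I would record as $ \{f^{-1}(U):U\in\mathcal{U}'\} $ for some countable $ \mathcal{U}'\subseteq\mathcal{U} $, such that $ A\subseteq{\rm St}(\bigcup_{U\in\mathcal{U}'}f^{-1}(U),\mathcal{W}) $.

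It then remains to transport the star relation back to $ Y $ and show $ B\subseteq{\rm St}(\bigcup\mathcal{U}',\mathcal{U}) $. Given $ y\in B $, I would choose $ x\in A $ with $ f(x)=y $; then $ x $ lies in some $ f^{-1}(U)\in\mathcal{W} $ whose intersection with $ \bigcup_{U'\in\mathcal{U}'}f^{-1}(U') $ is nonempty. From $ x\in f^{-1}(U) $ one gets $ y\in U $, and any point of $ f^{-1}(U)\cap f^{-1}(U') $ is sent by $ f $ into $ U\cap U' $, so $ U\cap\bigcup\mathcal{U}'\neq\emptyset $; hence $ y\in U\subseteq{\rm St}(\bigcup\mathcal{U}',\mathcal{U}) $. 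As $ \mathcal{U}' $ is countable, this establishes the property for $ Y $.

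The step I expect to be the main obstacle is the transport of the star operation through $ f $ in the last paragraph, since the argument must run in the correct direction: preimages commute with unions, and a common point of two preimages forces the images to meet, but not conversely. A secondary point requiring care is bookkeeping: distinct members of $ \mathcal{U} $ may share a preimage, so in passing from the chosen countable subfamily of $ \mathcal{W} $ to $ \mathcal{U}' $ one should fix one $ U\in\mathcal{U} $ per selected preimage, which keeps $ \mathcal{U}' $ countable and does no harm. The closure inclusion $ f(\overline{A})\subseteq\overline{B} $ is exactly continuity and should be invoked explicitly.
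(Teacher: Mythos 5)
Your proposal is correct and follows essentially the same route as the paper's proof: pull $B$ and the cover back along $f$ (using $\overline{f^{-1}(B)}\subseteq f^{-1}(\overline{B})$ by continuity), apply set star-Lindel\"{o}fness of $X$, and push the countable subfamily forward, with your final element-chase being exactly the justification of the paper's displayed inclusion $f({\rm St}(\bigcup\mathcal{U}',\mathcal{U}))\subseteq{\rm St}(\bigcup\mathcal{V}',\mathcal{V})$. Your bookkeeping remark about fixing one $U\in\mathcal{U}$ per selected preimage is a worthwhile refinement, since the paper's definition $\mathcal{V}'=\{V: f^{-1}(V)\in\mathcal{U}'\}$ could in principle be uncountable when distinct members of $\mathcal{U}$ share a preimage.
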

\begin{proof}Let $ X $ be a set star-Lindel\"{o}f space and $ f:X \rightarrow Y $ is a continuous mapping from $ X$ onto $ Y $. Let $ B $ be any subset of $ Y $ and   $ \mathcal{V} $ be an open cover of $ \overline{B} $. Let $ A =f^{-1} (B) $. Since $ f $ is continuous,  $ \mathcal{U}= \{f^{-1}(V): V \in \mathcal{V}\}$ is the collection of open sets in $ X $ with $ \overline{A}= \overline{f^{-1} (B)}\subseteq f^{-1}(\overline{B}) \subseteq f^{-1} (\bigcup \mathcal{V})=\bigcup \mathcal{U}$. As $ X $ is set star-Lindel\"{o}f, there exists a countable subset $ \mathcal{U}' $  of $ \mathcal{U} $ such that
	
\begin{center}$ A \subseteq  {\rm St}(\bigcup \mathcal{U}', \mathcal{U}) $. \end{center} Let $ \mathcal{V}'= \{V:f^{-1}(V) \in \mathcal{U}'\}$. Then $ \mathcal{V}' $ is a countable subset of $ \mathcal{V} $ and $ B=f(A) \subseteq f( {\rm St}(\bigcup \mathcal{U}', \mathcal{U})) \subseteq {\rm St}(\bigcup f(\{f^{-1}(V): V \in \mathcal{V}'\}), \mathcal{V})= {\rm St}(\bigcup \mathcal{V}', \mathcal{V})$. Thus $ Y $ is set star-Lindel\"{o}f space.\end{proof}

Similarly, we can prove the following.
\begin{theorem} 
	A continuous image of a set strongly star-Lindel\"{o}f space is set strongly star-Lindel\"{o}f.
\end{theorem}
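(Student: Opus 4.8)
The plan is to mimic the proof of Theorem \ref{3.10}, transporting the witnessing data through the continuous surjection, but now tracking a countable \emph{set of points} in $\overline{B}$ rather than a countable subfamily of the cover. Let $f : X \to Y$ be a continuous surjection with $X$ set strongly star-Lindel\"{o}f, fix a nonempty $B \subseteq Y$, and let $\mathcal{V}$ be an open cover of $\overline{B}$. As in Theorem \ref{3.10} I would pull everything back to $X$ by setting $A = f^{-1}(B)$ and $\mathcal{U} = \{f^{-1}(V) : V \in \mathcal{V}\}$. Continuity guarantees that the members of $\mathcal{U}$ are open and that $\overline{A} = \overline{f^{-1}(B)} \subseteq f^{-1}(\overline{B}) \subseteq f^{-1}(\bigcup \mathcal{V}) = \bigcup \mathcal{U}$, so $\mathcal{U}$ is an open collection covering $\overline{A}$.

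Next I would invoke the set strong star-Lindel\"{o}fness of $X$ to produce a countable set $F \subseteq \overline{A}$ with $A \subseteq {\rm St}(F, \mathcal{U})$, and I would take $F' = f(F)$ as the candidate witness in $Y$. Before using $F'$ one must check it is a legitimate witness, namely a countable subset of $\overline{B}$: countability is immediate since $f(F)$ is a continuous image of a countable set, and the inclusion $F \subseteq \overline{A} \subseteq f^{-1}(\overline{B})$ forces $f(F) \subseteq \overline{B}$. This verification is the one place where the present argument differs substantively from Theorem \ref{3.10}, since there the witness was a subfamily of the cover and here it is a point set that must be relocated into $\overline{B}$.

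The remaining step is to translate the star condition from $\mathcal{U}$ back to $\mathcal{V}$, and I expect this to be routine. Given $y \in B$, pick any $x$ in the fiber over $y$; since $y \in B$ we have $x \in f^{-1}(B) = A \subseteq {\rm St}(F, \mathcal{U})$, so some $f^{-1}(V) \in \mathcal{U}$ contains $x$ and meets $F$. Then $y = f(x) \in V$, and choosing $z \in F \cap f^{-1}(V)$ gives $f(z) \in F' \cap V$, whence $V \cap F' \not= \emptyset$; therefore $y \in {\rm St}(F', \mathcal{V})$. This yields $B \subseteq {\rm St}(F', \mathcal{V})$ and shows $Y$ is set strongly star-Lindel\"{o}f. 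I anticipate no genuine obstacle: the whole argument runs parallel to Theorem \ref{3.10}, the only subtlety being the confirmation, via the chain $F \subseteq \overline{A} \subseteq f^{-1}(\overline{B})$, that the image point-witness $f(F)$ indeed lands inside $\overline{B}$.
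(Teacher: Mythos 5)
Your proof is correct and follows essentially the same route the paper intends: the paper proves the set star-Lindel\"{o}f case (Theorem \ref{3.10}) by pulling the cover back through $f$ and pushing the witness forward, and states that the strongly star version is proved ``similarly,'' which is exactly your argument. Your explicit verification that $f(F) \subseteq \overline{B}$ via $F \subseteq \overline{A} \subseteq f^{-1}(\overline{B})$ is precisely the one point where the analogy needs checking, and you handle it correctly.
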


Next, we turn to consider preimages of set strongly star-Lindel\"{o}f and set star-Lindel\"{o}f spaces. We need new concepts called nearly set strongly star-Lindel\"{o}f and nearly set star-Lindel\"{o}f spaces.  A space $ X $ is said to be nearly set strongly star-Lindel\"{o}f (resp., nearly set star-Lindel\"{o}f) in $ X $  if for each subset $ Y $ of $ X $  and each open cover $ \mathcal{U} $ of  $ X $, there is a countable subset $ F $  of $ X $ (resp., a countable subset $ \mathcal{V} $  of $ \mathcal{U} $) such that $ Y \subseteq  {\rm St}(F, \mathcal{U}) $ (resp., $ Y \subseteq  {\rm St}(\bigcup \mathcal{V}, \mathcal{U}) $). 

\begin{theorem}
	Let $ f : X \rightarrow Y $ be an open, closed, and finite-to-one continuous mapping from a space X onto a set strongly star-Lindel\"{o}f space Y. Then X is  nearly  set strongly star-Lindel\"{o}f.
\end{theorem}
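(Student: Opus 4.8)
The plan is to push the given open cover of $X$ down to an open cover of $Y$ using the finite-to-one and closedness hypotheses, apply the set strongly star-Lindel\"{o}fness of $Y$ there, and then lift the resulting countable ``kernel'' back to $X$ by taking preimages. The openness of $f$ will be the ingredient that forces the stars to match up, which is the only genuinely delicate point.

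Concretely, let $S$ be a subset of $X$ (playing the role of the test set in the definition of nearly set strongly star-Lindel\"{o}f) and let $\mathcal{U}$ be an open cover of $X$; I must produce a countable $F \subseteq X$ with $S \subseteq {\rm St}(F,\mathcal{U})$. First I would fix, for each $y \in Y$, the finite fiber $f^{-1}(y)=\{x_1,\dots,x_{n_y}\}$ and choose $U_i^y \in \mathcal{U}$ with $x_i \in U_i^y$. Since $f$ is closed and $f^{-1}(y) \subseteq \bigcup_{i} U_i^y$, the set $V_y := Y \setminus f(X \setminus \bigcup_i U_i^y)$ is an open neighbourhood of $y$ with $f^{-1}(V_y) \subseteq \bigcup_i U_i^y$. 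Crucially, using that $f$ is open I would then shrink $V_y$ so that it lies inside $\bigcap_{i} f(U_i^y)$, which is an open neighbourhood of $y$ because each $f(U_i^y)$ is open and contains $y$. The family $\mathcal{W}=\{V_y : y \in Y\}$ is then an open cover of $Y$.

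Next I would set $B=f(S)$ and apply the set strongly star-Lindel\"{o}fness of $Y$ to the subset $B$ and the family $\mathcal{W}$, which covers $\overline{B}\subseteq Y$: this yields a countable subset $G \subseteq \overline{B}$ with $B \subseteq {\rm St}(G,\mathcal{W})$. I would then take $F = f^{-1}(G)$. Because $G$ is countable and every fiber of $f$ is finite, $F$ is countable, and it is the required subset of $X$ (note the nearly property only demands $F\subseteq X$, not $F\subseteq\overline{S}$, so no closure bookkeeping is needed here).

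The heart of the argument, and the step I expect to be the main obstacle, is verifying $S \subseteq {\rm St}(F,\mathcal{U})$, since a priori the member of $\mathcal{U}$ that captures a given $s$ need not meet $F$. For $s \in S$ we have $f(s) \in {\rm St}(G,\mathcal{W})$, so there exist $y$ and $g \in G$ with $f(s),g \in V_y$; from $f^{-1}(V_y) \subseteq \bigcup_i U_i^y$ we obtain $s \in U_{i_0}^y$ for some index $i_0$. This is exactly where the openness pays off: since $g \in V_y \subseteq f(U_{i_0}^y)$, the fiber $f^{-1}(g)$ meets the very same $U_{i_0}^y$, and any such point lies in $F=f^{-1}(G)$. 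Hence $U_{i_0}^y \in \mathcal{U}$ contains $s$ and meets $F$, so $s \in {\rm St}(F,\mathcal{U})$, which completes the proof.
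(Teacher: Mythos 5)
Your proof is correct and follows essentially the same route as the paper's: use finite fibers to pick finitely many members of $\mathcal{U}$ over each point, use closedness to get $V_y$ with $f^{-1}(V_y)\subseteq\bigcup_i U_i^y$, use openness to shrink $V_y$ into $\bigcap_i f(U_i^y)$, apply the set strongly star-Lindel\"{o}f property of $Y$ to $B=f(S)$, and pull the countable kernel back via the finite-to-one hypothesis. The only (harmless) cosmetic difference is that you build $V_y$ for every $y\in Y$ while the paper does so only for $y\in\overline{B}$.
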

\begin{proof} Let $ A \subseteq X$ be any nonempty set and $ \mathcal{U} $ be an open cover of  $ X  $. Then $ B=f(A) $ is a subset of $ Y $. Let $ y \in \overline{B} $. Then $ f^{-1} \{y\} $ is a finite subset of $ X $, thus there is a finite subset $ \mathcal{U}_y $ of $ \mathcal{U} $  such that $ f^{-1} \{y\} \subseteq \bigcup  \mathcal{U}_y $ and $ U \bigcap f^{-1} \{y\}\not= \emptyset $ for each $ U \in \mathcal{U}_y $. Since $ f $ is closed, there exists an open neighborhood $ V_y $ of $ y $ in $ Y $ such that $ f^{-1}(V_y)\subseteq \bigcup \{U:U \in \mathcal{U}_y \} $. Since $ f $ is open, we can assume that
	
	\begin{center}$ V_y \subseteq \bigcap \{f(U):U \in \mathcal{U}_y \} $. \end{center} Then $ \mathcal{V}=\{V_y: y \in \overline{B} \} $ is an open cover of $ \overline{B} $. Since $ Y $ is set strongly   star-Lindel\"{o}f,  there exists a countable subset $ F $ of $ \overline{B} $ such that  $ B \subseteq {\rm St}(F,\mathcal{V})$. Since $ f $ is finite-to-one, then $ f^{-1}(F) $
	is a countable subset of $ X $. We have to show that 
	
\begin{center}$ A \subseteq   {\rm St}( f^{-1}(F), \mathcal{U}) $. \end{center} Let $ x \in A $. Then there exists $ y \in B $ such that $ f(x) \in V_y $ and $ V_y \bigcap F \not= \emptyset $. Since

\begin{center} $x \in  f^{-1}  (V_y) \subseteq \bigcup \{U:U \in \mathcal{U}_y \} $, \end{center} we can choose $ U \in \mathcal{U}_y $ with $ x \in U $. Then $ V_y \subseteq f(U) $. Thus $ U \bigcap f^{-1}  (F) \not= \emptyset  $. Hence $ x \in {\rm St}(f^{-1}  (F) ,\mathcal{U}) $.  Therefore $ X $ is nearly  set strongly star-Lindel\"{o}f.	\end{proof}

\begin{theorem} \label{3.16}
	If $ f : X \rightarrow Y $ is an  open and perfect continuous mapping and Y is a set star-Lindel\"{o}f space, then X is  nearly  set star-Lindel\"{o}f.
\end{theorem}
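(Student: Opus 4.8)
The plan is to mimic the structure of the preceding theorem (the nearly set strongly star-Lindel\"{o}f case), adapting it in exactly two places: the fibers of $ f $ are now compact rather than finite, and the target conclusion is the star- rather than the strongly star- version. So I would begin with an arbitrary nonempty $ A \subseteq X $ and an open cover $ \mathcal{U} $ of $ X $, and push $ A $ forward to $ B = f(A) \subseteq Y $.

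First I would construct, for each $ y \in \overline{B} $, a basic neighborhood $ V_y $ in $ Y $ together with a finite subfamily $ \mathcal{U}_y \subseteq \mathcal{U} $. Since $ f $ is perfect, $ f^{-1}\{y\} $ is compact (and nonempty, as $ \overline{B} \subseteq f(X) $ because $ f $ is closed), so the cover $ \mathcal{U} $ of $ X $ admits a finite subfamily $ \mathcal{U}_y $ covering $ f^{-1}\{y\} $; discarding members that miss $ f^{-1}\{y\} $, I may assume each $ U \in \mathcal{U}_y $ meets $ f^{-1}\{y\} $, whence $ y \in f(U) $. Closedness of $ f $ then gives an open $ V_y \ni y $ with $ f^{-1}(V_y) \subseteq \bigcup \mathcal{U}_y $ (take $ V_y = Y \setminus f(X \setminus \bigcup \mathcal{U}_y) $). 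Openness of $ f $ makes $ \bigcap \{ f(U) : U \in \mathcal{U}_y \} $ an open neighborhood of $ y $ (a finite intersection of open sets, each containing $ y $), so after shrinking I may also assume $ V_y \subseteq \bigcap \{ f(U) : U \in \mathcal{U}_y \} $. Then $ \mathcal{V} = \{ V_y : y \in \overline{B} \} $ is an open cover of $ \overline{B} $.

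Next I would invoke the set star-Lindel\"{o}fness of $ Y $ applied to the subset $ B $ and its cover $ \mathcal{V} $: there is a countable $ \mathcal{V}' \subseteq \mathcal{V} $ with $ B \subseteq {\rm St}(\bigcup \mathcal{V}', \mathcal{V}) $. I then set $ \mathcal{W} = \bigcup \{ \mathcal{U}_y : V_y \in \mathcal{V}' \} $, which is a countable subfamily of $ \mathcal{U} $, being a countable union of finite sets. The remaining task is to verify $ A \subseteq {\rm St}(\bigcup \mathcal{W}, \mathcal{U}) $, which yields that $ X $ is nearly set star-Lindel\"{o}f.

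The crux, and the step I expect to be the main obstacle precisely because the fibers are no longer singletons, is this last containment. Given $ x \in A $, the point $ f(x) $ lies in some $ V_{y_0} $ that meets $ \bigcup \mathcal{V}' $, say at a witness $ z \in V_{y_0} \cap V_{y_1} $ with $ V_{y_1} \in \mathcal{V}' $. From $ f(x) \in V_{y_0} $ and $ f^{-1}(V_{y_0}) \subseteq \bigcup \mathcal{U}_{y_0} $ I obtain some $ U_0 \in \mathcal{U}_{y_0} $ with $ x \in U_0 $. The openness clause $ V_{y_0} \subseteq f(U_0) $ lets me lift the witness $ z $ to a point $ p \in U_0 $ with $ f(p) = z $; since $ z \in V_{y_1} $, this forces $ p \in f^{-1}(V_{y_1}) \subseteq \bigcup \mathcal{U}_{y_1} \subseteq \bigcup \mathcal{W} $. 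Thus $ U_0 $ meets $ \bigcup \mathcal{W} $, and as $ x \in U_0 \in \mathcal{U} $ I conclude $ x \in {\rm St}(\bigcup \mathcal{W}, \mathcal{U}) $. The delicate feature is that $ x $ itself need not map into $ V_{y_1} $; it is the lifted witness $ p $, produced by the openness of $ f $, that bridges $ U_0 $ to the retained family $ \mathcal{W} $. This is exactly the point where the hypothesis that $ f $ is open, and not merely perfect, is essential.
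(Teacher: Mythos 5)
Your proposal is correct and follows essentially the same route as the paper's proof: cover each compact fiber $f^{-1}\{y\}$ by a finite $\mathcal{U}_y$, use closedness to get $V_y = Y \setminus f(X \setminus \bigcup \mathcal{U}_y)$ and openness to shrink $V_y$ into $\bigcap\{f(U) : U \in \mathcal{U}_y\}$, apply set star-Lindel\"{o}fness of $Y$ to $B = f(A)$, and take $\mathcal{W}$ to be the union of the retained finite families. In fact your write-up is slightly more careful than the paper's at two points the paper glosses over --- discarding members of $\mathcal{U}_y$ that miss the fiber (needed for the shrinking step, which the paper uses implicitly when it asserts $V_y \subseteq f(U)$) and using only the inclusion $f^{-1}(\bigcup \mathcal{V}') \subseteq \bigcup \mathcal{W}$ where the paper claims an equality.
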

\begin{proof} Let $ A \subseteq X$ be any nonempty set and $ \mathcal{U} $ be an open cover of  $ X  $. Then $ B=f(A) $ is a subset of $ Y $. Let $ y \in \overline{B} $. Then $ f^{-1} \{y\} $ is a compact subset of $ X $, thus there is a finite subset $ \mathcal{U}_y $ of $ \mathcal{U} $  such that $ f^{-1} \{y\} \subseteq \bigcup  \mathcal{U}_y $. Let $ U_y= \bigcup  \mathcal{U}_y $. Then $ V_y=Y \setminus f(X \setminus U_y) $ is a neighborhood of $ y $, since $ f $ is closed. Then $ \mathcal{V}=\{V_y: y \in \overline{B} \} $ is an open cover of $ \overline{B} $. Since $ Y $ is set star-Lindel\"{o}f,  there exists a countable subset $ \mathcal{V}' $   of $ \mathcal{V} $ such that 	\begin{center} $ B \subseteq  {\rm St}(\bigcup \mathcal{V}',\mathcal{V}) $. \end{center}  Without loss of generality, we may assume that
	$ \mathcal{V}'=\{V_{y_i}: i \in N' \subseteq \mathbb{N} \} $. Let $ \mathcal{W}= \bigcup_{i \in N'} \mathcal{U}_{y_i} $. Since $ f^{-1}  (V_{y_i}) \subseteq \bigcup \{U:U \in \mathcal{U}_{y_i} \} $ for each $ i \in N' $. Then $ \mathcal{W} $ is a countable subset of $ \mathcal{U}$ and
	
		\begin{center}$ f^{-1} (\bigcup\mathcal{V}') = \bigcup \mathcal{W} $. \end{center} Next, we show that

	\begin{center}$ A \subseteq {\rm St}(\bigcup \mathcal{W},\mathcal{U}) $. \end{center} Let $ x \in A$. Then there exists a  $ y \in B $ such that

\begin{center}$f(x) \in V_y $ and $ V_y \bigcap (\bigcup\mathcal{V}') \not= \emptyset$. \end{center} Since

\begin{center} $x \in  f^{-1}  (V_y) \subseteq \bigcup \{U:U \in \mathcal{U}_y \} $, \end{center} we can choose $ U \in \mathcal{U}_y $ with $ x \in U $. Then $ V_y \subseteq f(U) $. Thus $ U \bigcap f^{-1}  (\bigcup \mathcal{V}') \not= \emptyset $. Hence $ x \in {\rm St}( f^{-1}  (\bigcup \mathcal{V}'),\mathcal{U}) $. Therefore $ x \in {\rm St}( \bigcup \mathcal{W},\mathcal{U}) $, which shows that $ A \subseteq  {\rm St}(\bigcup \mathcal{W},\mathcal{U}) $. Thus $ X $ is nearly set star-Lindel\"{o}f.	\end{proof}

It is known that the product of star-Lindel\"{o}f space and  compact space is a star-Lindel\"{o}f (see \cite{DEK}).
\begin{problem} \label{3.19}
	Does the product of set star-Lindel\"{o}f space and a compact space is set star-Lindel\"{o}f?
\end{problem}

However, the product of two set strongly star-Lindel\"{o}f spaces need not be set strongly star-Lindel\"{o}f. The following well-known example shows that the product of two countably compact (hence, set strongly star-Lindel\"{o}f) spaces need not be set star-Lindel\"{o}f.  Here we give the roughly proof for the sake of completeness.

\begin{example} \label{3.20}
	There exist two countably compact  spaces X and Y such that $ X \times Y $ is not set star-Lindel\"{o}f (hence, not set strongly star-Lindel\"{o}f).
\end{example}
\begin{proof}
Let $ D(\mathfrak{c}) $ be a discrete space of the cardinality $ \mathfrak{c} $. We can define $ X= \bigcup_{\alpha< \omega_1} E_\alpha $ and $ Y= \bigcup_{\alpha< \omega_1} F_\alpha $, where $ E_\alpha $ and $ F_\alpha $ are the subsets of $ \beta (D(\mathfrak{c})) $ which are defined inductively to satisfy the following three conditions:

\begin{enumerate}[(1)]
	\item $ E_\alpha \bigcap F_\beta=D(\mathfrak{c})  \ if \ \alpha \not= \beta$;
	\item $ |E_\alpha| \leq \mathfrak{c} $ and $  |F_\alpha| \leq \mathfrak{c} $;
	\item every infinite subset of $ E_\alpha $ (resp., $ F_\alpha $) has an accumulation point in $ E_{\alpha+1} $ (resp, $ F_{\alpha+1} $).
\end{enumerate}
Those sets $ E_\alpha $ and $ F_\alpha $ are well-defined since every infinite closed set in $ \beta (D(\mathfrak{c})) $ has the cardinality $ 2^\mathfrak{c} $ (see \cite{W}). Then, $ X \times Y $ is not set star-Lindel\"{o}f, and the diagonal $ \{\langle d, d \rangle: d \in D(\mathfrak{c}) \} $ is a discrete open and closed subset of $ X \times Y $ with the cardinality $ \mathfrak{c} $. Thus $ X \times Y $ is not set star-Lindel\"{o}f, since the open and closed subset of set star-Lindel\"{o}f space is set star-Lindel\"{o}f and the diagonal $ \{\langle d, d \rangle: d \in D(\mathfrak{c}) \} $ is not set star-Lindel\"{o}f.
\end{proof}

\begin{remark}
	Example \ref{3.20}, shows that the product of set star-Lindel\"{o}f space and countably compact space need not be set star-Lindel\"{o}f.
\end{remark}
  van Douwen-Reed-Roscoe-Tree [\cite{DEK}, Example 3.3.3] gave an example of a countably compact $ X $ (hence, set star-Lindel\"{o}f) and a Lindel\"{o}f space $ Y $ such that  $ X \times Y$ is not strongly star-Lindel\"{o}f. Now we use this example to show that $ X \times Y$ is not set star-Lindel\"{o}f.

\begin{example} \label{3.22}
	There exists a countably compact (hence, set strongly star-Lindel\"{o}f) space $ X $ and a Lindel\"{o}f space Y such that $ X \times Y$ is not set star-Lindel\"{o}f. 
\end{example}
\begin{proof}Let $ X=[0,\omega_1) $ withe usual order topology. Let $ Y=[0, \omega_1] $ with the following topology. Each point $ \alpha < \omega_1$ is isolated and a set $ U $ containing $ \omega_1 $ is open if and only if $ Y \setminus U $ is countable. Then, $ X $ is countably compact and $ Y $ is Lindel\"{o}f. It is enough to show that $ X \times Y $ is not star-Lindel\"{o}f, since every set star-Lindel\"{o}f space is star-Lindel\"{o}f.

For each $ \alpha< \omega_1 $,  $ U_\alpha= X \times \{\alpha\} $ is open in $ X \times Y$. For each $ \beta< \omega_1 $,  $ V_\beta= [0,\beta] \times (0,\omega_1] $ is open in $ X \times Y$. Let $ \mathcal{U}= \{U_\alpha: \alpha< \omega_1 \} \cup \{V_\beta: \beta< \omega_1 \} $. Then $ \mathcal{U} $ is an open cover of $ X \times Y$. Let $ \mathcal{V} $ be any countable subset of $ \mathcal{U} $. Since $ \mathcal{V} $ is countable, there exists $ \alpha'< \omega_1 $ such that $ U_\alpha \notin \mathcal{V} $ for each $ \alpha> \alpha' $.  Also,  there exists $ \alpha''< \omega_1 $ such that $ V_\beta \notin \mathcal{V} $ for each $ \beta> \alpha'' $. Let  $ \beta =sup \{\alpha', \alpha''\} $. Then $ U_\beta \bigcap (\bigcup \mathcal{V}) =\emptyset$ and $ U_\beta$ is the only element containing $ \langle \beta , \alpha \rangle $. Thus $ \langle \beta , \alpha \rangle \notin {\rm St}(\bigcup \mathcal{V}, \mathcal{U}) $, which shows that $ X $ is not star-Lindel\"{o}f. \end{proof}

van Douwen-Reed-Roscoe-Tree [\cite{DEK}, Example 3.3.6] gave an example of Hausdorff regular Lindel\"{o}f spaces $ X $ and $ Y $ such that  $ X \times Y$ is  star-Lindel\"{o}f. Now we use this example and show that the product of two Lindel\"{o}f spaces is not set star-Lindel\"{o}f.

\begin{example}
	There exists a Hausdorff regular Lindel\"{o}f spaces $ X $ and $ Y $ such that $ X \times Y$ is not set star-Lindel\"{o}f.
\end{example}
\begin{proof} Let $ X=\mathbb{R}\setminus \mathbb{Q} $ have the induced metric topology. Let $ Y=\mathbb{R} $ with each point of $ \mathbb{R}\setminus \mathbb{Q} $ is isolated and points of $ \mathbb{Q} $ having metric neighborhoods. Hence both spaces $ X $ and $ Y $ are Hausdorff regular Lindel\"{o}f spaces and first countable too, so $ X \times Y$ Hausdorff regular and first countable. Now we show that  $ X \times Y$ is not set star-Lindel\"{o}f. Let $ A= \{(x,x)\in X \times Y: x\in X \} $. Then $ A $ is uncountable closed and discrete set (see [\cite{DEK}, Example 3.3.6]). For $ (x,x) \in A $, $ U_x=X \times \{x\} $ is open subset of $ X \times Y $. Then $ \mathcal{U}= \{U_x:(x,x) \in \overline{A} \} $ is an open cover of $ \overline{A} $. Let $ \mathcal{V} $ be any countable subset of $ \mathcal{U} $. Then there exists $ (a,a) \in A $ such that $ (a,a) \notin \bigcup \mathcal{V} $ and thus $ (\bigcup \mathcal{V}) \bigcap U_a=\emptyset $. But $ U_a $ is the only element of $ \mathcal{U} $ containing $ (a,a) $. Thus $ (a,a) \notin {\rm St}(\bigcup \mathcal{V},\mathcal{U}) $, which completes the proof. \end{proof}

\end{document}